\def\Z{\mathbb{Z}}
\def\G{\Gamma}
\def\<{\langle}
\def\>{\rangle}
\def\-{\overline}
\newtheorem{theorem}{Theorem}[section]
\newtheorem{lemma}[theorem]{Lemma}
\theoremstyle{definition}
\numberwithin{equation}{section}
\def\serieslogo@{\relax}
\def\@setcopyright{\relax}
\begin{document}

\title{On the recognition of right-angled Artin groups}

\author[Martin R Bridson]{Martin R.~Bridson}
\address{Mathematical Institute \\
Andrew Wiles Building\\
Oxford OX2 6GG \\ 
EU} 
\email{bridson@maths.ox.ac.uk}

\subjclass{20F36, 20F10} 
  
\maketitle

\begin{abstract} 
There does not exist an algorithm that can determine whether or not a group presented by commutators is a right-angled Artin group.
\end{abstract}

\section{Introduction}

In \cite{DW} Day and Wade  introduced an elegant new homology theory for subspace arrangements and related it to recognition problems concerning {\em{right-angled Artin groups}} (RAAGs). 
In setting the context for their work,
they asked if there is an algorithmic procedure for recognizing RAAGs among groups given by   presentations whose only relations are commutators (\cite{DW} Question 1.2) and speculated that the answer was likely to be no. The purpose of this note is to confirm this speculation.

\begin{theorem}\label{thm} There does not exist an algorithm that can determine whether or not a group presented by commutators is a RAAG.

In more detail, there is no algorithm that, given $22$ words $u_i$ in the free group $F(a_1,a_2,a_3,a_4)$ can determine whether or not the group with presentation
$$\<a_1,a_2,a_3,a_4, t \mid [a_1,a_3], [a_1,a_4], [a_2,a_3], [a_2,a_4], [t,u_1],\dots,[t,u_{22}]\>$$
  is a RAAG. Nor is there an algorithm that can determine whether or not such a group is commensurable with a RAAG or quasi-isometric to a RAAG.
\end{theorem}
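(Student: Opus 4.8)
The plan is to realise the group of the statement as an explicit amalgam and to reduce a known undecidable problem to the recognition of RAAGs. Write $G$ for the group in the theorem. The commutators $[a_1,a_3],[a_1,a_4],[a_2,a_3],[a_2,a_4]$ force $\langle a_1,a_2,a_3,a_4\rangle$ to be the RAAG $F(a_1,a_2)\times F(a_3,a_4)\cong F_2\times F_2$, while the relations $[t,u_i]$ make $t$ commute with the subgroup $H=\langle u_1,\dots,u_{22}\rangle$ of this product. Thus $G\cong(F_2\times F_2)*_H(H\times\langle t\rangle)$, and everything will hinge on how wild $H$ is allowed to be.

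First I would fix the source of undecidability. Beginning with a finitely presented group of unsolvable word problem, a construction of the type used in the proof of the Adian--Rabin theorem yields a recursively defined family of $2$-generator, $20$-relator presentations $Q_w=\langle x,y\mid \rho_1(w),\dots,\rho_{20}(w)\rangle$, indexed by words $w$ over a fixed alphabet, with the dichotomy that $Q_w$ is trivial when $w$ lies in a fixed set $S$ and $Q_w$ has unsolvable word problem when $w\notin S$, where $S$ is not recursive. The useful feature of such a construction is that the relator \emph{lengths} may grow with $w$ while their \emph{number} stays fixed, which is exactly what keeps us inside the required format. I would then take $H=H_w$ to be the Mihailova fibre-product subgroup of $F(a_1,a_2)\times F(a_3,a_4)$ determined by the two maps onto $Q_w$ sending $a_1,a_3\mapsto x$ and $a_2,a_4\mapsto y$; it is generated by the $22$ words $u_1=a_1a_3$, $u_2=a_2a_4$, and $u_{2+j}=\rho_j(w)(a_1,a_2)$ for $1\le j\le 20$. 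The assignment $w\mapsto(u_1,\dots,u_{22})$ is then the desired recursive reduction.

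The positive direction is immediate: if $w\in S$ then $Q_w$ is trivial, so the fibre product $H_w$ is all of $F_2\times F_2$; since the $u_i$ now generate the whole product, the relations $[t,u_i]=1$ force $[t,a_k]=1$ for every $k$, whence $G\cong F_2\times F_2\times\Z$, the RAAG on the join of $K_{2,2}$ with a single vertex. For the negative direction, suppose $w\notin S$. Then $Q_w$ has unsolvable word problem, so by Mihailova's theorem membership in $H_w$ is undecidable in $F_2\times F_2$. Using the normal form for amalgamated products one checks that for $a\in F_2\times F_2$ one has $[a,t]=1$ in $G$ if and only if $a\in H_w$ (the element $a\in A\smallsetminus H_w$ and the syllables $t^{\pm1}\in B\smallsetminus H_w$ produce a reduced word of syllable length four). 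Hence a solution to the word problem in $G$ would decide membership in $H_w$, so $G$ itself has unsolvable word problem, and therefore, being finitely presented, its Dehn function is not bounded by any recursive function.

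It remains to upgrade ``not a RAAG'' to ``not even quasi-isometric to a RAAG,'' and here the key point is that the equivalence class of the Dehn function of a finitely presented group is a quasi-isometry invariant, while every RAAG is a CAT(0) group and so has at worst quadratic, in particular recursive, Dehn function. Any group commensurable or quasi-isometric to a RAAG inherits a recursive Dehn function, so the group $G$ above, with its non-recursive Dehn function, can be none of these when $w\notin S$. Combining the two directions, each of the three properties in the statement holds for $G$ exactly when $w\in S$; as $S$ is not recursive, none of them is decidable. The main obstacle I anticipate is the first step: arranging the Adian--Rabin-type family within the rigid format of a $2$-generator, $20$-relator presentation while simultaneously guaranteeing the clean ``trivial versus unsolvable word problem'' dichotomy and the non-recursiveness of $S$. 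Once that seed is in place, the geometric input---Mihailova's theorem together with the Dehn-function obstruction---is comparatively routine.
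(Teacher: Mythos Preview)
Your argument is correct and follows the same architecture as the paper: encode an undecidable triviality problem for $2$-generator, $20$-relator presentations into Mihailova fibre products $H\le F_2\times F_2$, then adjoin a stable letter $t$ centralising $H$ so that the resulting group is $F_2\times F_2\times\Z$ when the quotient is trivial and pathological otherwise. Your amalgamated-product description $(F_2\times F_2)\ast_H(H\times\langle t\rangle)$ coincides with the paper's HNN description $(F_2\times F_2)\ast_H$, and your normal-form argument that $[a,t]=1\iff a\in H$ is exactly the paper's appeal to Britton's Lemma.

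The one substantive difference is the obstruction you invoke in the negative case. The paper first shows $G$ is not a RAAG by a homological argument: Baumslag--Roseblade implies $H_2(H,\Z)$ is not finitely generated whenever the quotient is infinite, and a Mayer--Vietoris computation then forces $H_3(G,\Z)$ to be infinitely generated, so $G$ is not of type $\mathrm{FP}_3$; for the commensurability and quasi-isometry claims it then adds the unsolvable word problem. You bypass the homology entirely and use the unsolvable word problem (equivalently, non-recursive Dehn function) as a single invariant that simultaneously rules out being, being commensurable with, or being quasi-isometric to a RAAG. Your route is leaner for this theorem; the paper's $\mathrm{FP}_3$ obstruction has the mild advantage that it applies whenever the quotient is merely infinite, not just when it has unsolvable word problem, but that extra strength is not needed here since the seed family already enjoys the sharp trivial/unsolvable dichotomy.

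On the point you flag as the main obstacle: the family you want is not really an Adian--Rabin construction but Miller's (Lemma~3.6 of his survey), which from an $n$-generator, $m$-relator group and a word $w$ produces a $2$-generator, $(n{+}m{+}3)$-relator presentation that is trivial if $w=1$ and contains the original group otherwise; feeding in Borisov's $5$-generator, $12$-relator group with unsolvable word problem gives exactly the $20$ relators you need.
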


\section{Fibre products and triviality for 2-generator groups} 

In the aftermath of the construction by Novikov \cite{nov} and Boone \cite{boone}
of finitely presented groups with unsolvable word problem, 
many other decisions problems for groups were proved to be
unsolvable through subtle work by many authors. We shall
appeal to two results that come from the work of C.F. Miller III.

Let $K$ be a group given by a presentation with $n$ generators and
$m$ relations. Following Miller, one can associate to each word $w$
in the generators of $K$ a presentation with $2$ generators and $n+m+3$
relations -- this is derived from the presentation
in Lemma 3.6 of \cite{cfm2} by making Tietze moves to remove  unnecessary generators.
The group given by this presentation is trivial if $w=1$ in $K$, but it contains 
$K$ if $w\neq 1$. 

The most concise finite presentation that is known for a group with 
unsolvable word problem is the one constructed by Borisov \cite{boris} half a 
century ago -- it has $5$ generators and
$12$ relations. By applying Miller's construction to words in the generators of
Borisov's example, we obtain a recursive sequence of 
2-generator presentations $\mathcal{P}_n = \<a_1,a_2\mid S_n\>$ with $|S_n|=20$ such that 
the group presented is either trivial or else has an unsolvable word problem, and there
is no algorithm that can determine the set of integers $n$ for which each alternative holds. 

We exploit these examples  in the manner of  Mihailova \cite{mihailova} and
Miller  (\cite{cfm} p.39).

\begin{lemma} \label{lemma}
Let $F$ be a free group of rank $2$ with generators $\{a_1,a_2\}$. If $k\ge 20$,
then there exists  a recursive sequence
$(S_n)$ of subsets of $F$, each of  cardinality $k$, such that there is no algorithm to
determine whether or not $F\times F$ is generated by $U_n=\{(a_1,a_1), (a_2,a_2),(s,1): s\in S_n\}$. 
Moreover, if $\<U_n\>$ is a proper subgroup then 
$H_2(\<U_n\>,\Z)$ is not finitely generated and there is
no algorithm to determine which words in the generators of $F\times F$ 
determine elements of $\<U_n\>$.
\end{lemma}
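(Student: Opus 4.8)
The plan is to identify $\<U_n\>$ with the fibre product associated to the two-generator presentation $\mathcal P_n=\<a_1,a_2\mid S_n\>$, and then to deduce each clause from the triviality dichotomy for the group $Q_n$ that it presents.

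I first set up the fibre product. Let $\phi_n\colon F\onto Q_n$ be the quotient map and $N_n=\ker\phi_n$, the normal closure of $S_n$ in $F$; put $P_n=\{(u,v)\in F\times F:\phi_n(u)=\phi_n(v)\}$. Writing $(u,v)=(v,v)\,(uv^{-1},1)$ and noting that $uv^{-1}\in N_n$ precisely when $\phi_n(u)=\phi_n(v)$, one sees that $P_n$ is the product of the diagonal $\Delta=\<(a_1,a_1),(a_2,a_2)\>$ with $N_n\times 1$. Since conjugating $(s,1)$ by $(w,w)\in\Delta$ gives $(wsw^{-1},1)$ and $N_n$ is the normal closure of $S_n$, the set $U_n$ generates exactly $P_n$; thus $\<U_n\>=P_n$. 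Consequently $\<U_n\>=F\times F$ iff $N_n=F$ iff $Q_n$ is trivial, and otherwise $\<U_n\>=P_n$ is a proper subgroup; by the dichotomy the proper case is exactly when $Q_n$ is infinite with unsolvable word problem, so then $N_n$ has infinite index in $F$. Because the $\mathcal P_n$ are built so that $Q_n$ is either trivial or has unsolvable word problem, with no algorithm deciding which alternative occurs, there is no algorithm deciding whether $\<U_n\>=F\times F$. This gives the first assertion; the values $k>20$ are obtained by adjoining $k-20$ further elements of $N_n$ (say conjugates of a fixed nontrivial relator), which does not change $\<U_n\>$.

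For the membership problem I use the Mihailova reduction: for a word $w$ in $a_1,a_2$ one has $(w,1)\in P_n$ iff $w\in N_n$ iff $w=1$ in $Q_n$. Hence a membership algorithm for $\<U_n\>=P_n$ would solve the word problem in $Q_n$, which is impossible in the proper case, where $Q_n$ has unsolvable word problem.

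The crux is the computation of $H_2(P_n,\Z)$. Since $\Delta\isom F$ is free and the subgroup $N_n\le F$ is itself free, the extension $1\to N_n\times 1\to P_n\to\Delta\to 1$ has $H_2(N_n)=0$ and $H_{\ge 2}(F,-)=0$, so its Lyndon--Hochschild--Serre spectral sequence collapses to give $H_2(P_n,\Z)\isom H_1(F,A_n)$, where $A_n=N_n/[N_n,N_n]$ is the relation module of $Q_n$. When $Q_n$ is infinite, $N_n$ has infinite index in $F$ and is free of infinite rank, so $A_n$ is infinitely generated as an abelian group. The difficulty is to promote this to infinite generation of $H_1(F,A_n)$: a direct rank count is inconclusive, since every space in sight is infinite dimensional. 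I would argue over $\Q$, using flatness to get $H_1(F,A_n)\otimes\Q\isom H_1(F,A_n\otimes\Q)$ and then analysing the two-term complex $V\oplus V\xrightarrow{\delta}V$ that computes it, where $V=A_n\otimes\Q$ and $\delta(x,y)=(\-a_1-1)x+(\-a_2-1)y$ with $\-a_i$ the image of $a_i$ in $Q_n$; here $\mathrm{coker}\,\delta=V_{Q_n}$ is finite dimensional (as $Q_n$ is finitely presented, $H_2(Q_n,\Z)$ is finitely generated, and the five-term sequence bounds $V_{Q_n}$), so the point is to show that $\ker\delta$ is infinite dimensional, the relevant structural input being that $\Q[Q_n]$ carries no nonzero $Q_n$-invariant when $Q_n$ is infinite. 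I expect this final linear-algebra step over $\Q[Q_n]$ to be the main obstacle; the cleanest alternative is to quote the known description of the finiteness properties of such fibre products, according to which $H_2(P_n,\Z)$ is finitely generated if and only if $N_n$ is.
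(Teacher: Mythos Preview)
Your approach is essentially the paper's: identify $\langle U_n\rangle$ with the fibre product $P_n$ associated to $\mathcal P_n$, read off the undecidability of generation from the triviality dichotomy for $Q_n$, and use the Mihailova reduction $(w,1)\in P_n\Leftrightarrow w=1$ in $Q_n$ for the membership clause. The padding to $k>20$ by adjoining further elements of $N_n$ is fine.

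The one substantive divergence is the $H_2$ step. The paper does not compute anything here: it simply invokes Baumslag--Roseblade's Theorem~A, which says directly that if $G(\mathcal P)$ is infinite then $H_2(P,\Z)$ is not finitely generated. Your spectral-sequence reduction $H_2(P_n,\Z)\cong H_1(F,A_n)$ is correct, and your observation that $\mathrm{coker}\,\delta=(A_n)_{Q_n}$ is finitely generated (via the five-term sequence and finite presentability of $Q_n$) is also correct. But the remaining step---showing $\ker\delta$ is infinite-dimensional---is a genuine gap as written: the fact that $\Q[Q_n]$ has no nonzero $Q_n$-invariants when $Q_n$ is infinite does not by itself bound $\ker\delta$ from below, and since every vector space in sight is infinite-dimensional no naive rank count will work. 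Completing this would amount to reproving (a case of) Baumslag--Roseblade, which is not elementary. Your own suggested fallback---``quote the known description of the finiteness properties of such fibre products''---is precisely the Baumslag--Roseblade citation the paper uses, so once you take that route the two proofs coincide.
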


\begin{proof} Associated to any 2-generator finite presentation
$\mathcal{P}=\<a_1,a_2\mid r_1,\dots,r_M\>$
one has the
fibre product
$P <F\times F$ consisting of pairs $(u,v)$ such that $u=v$ in the group 
$G(\mathcal{P})$ presented by $\mathcal{P}$. It is easy to check that
$P$ is generated by $U=\{(a_1,a_1), (a_2,a_2), (r_1,1),\dots, (r_M,1)\}$. If  $G(\mathcal{P})$ is trivial, $P=F\times F$.
But if $G(\mathcal{P})$ is infinite, Baumslag and Roseblade's
Theorem A \cite{BR} shows that $H_2(P,\Z)$ is not finitely generated. 
Moreover, if the word problem is unsolvable
in $G(\mathcal{P})$, the membership problem of $P<F\times F$ is unsolvable,
because deciding if $(w,1)\in P$ is equivalent to deciding if $w=1$ in $G(\mathcal{P})$.
Consideration of the presentations $\mathcal{P}_n = \<a_1,a_2\mid S_n\>$
from the discussion preceding the lemma completes the proof.
\end{proof}

We shall apply the following lemma with $D=F\times F$ and $C=\<U_n\>$.

\begin{lemma}\label{l:mv} For any HNN extension of the form $\G=D\ast_C$,
if $H_2(D,\Z)$ is finitely generated but $H_{2}(C,\Z)$
is not, then $H_{3}(\G,\Z)$ is not finitely generated.
\end{lemma}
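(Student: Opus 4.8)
The plan is to use the Mayer--Vietoris sequence in group homology associated to the splitting of $\G = D\ast_C$ as an HNN extension. By Bass--Serre theory, $\G$ is the fundamental group of a graph of groups with a single vertex group $D$ and a single edge group $C$, and it acts on the corresponding tree with these stabilisers. This action yields a long exact sequence in integral homology (see, e.g., Bieri, or Brown's \emph{Cohomology of Groups}, Ch.~VII):
$$\cdots \to H_n(C,\Z) \xrightarrow{\theta_n} H_n(D,\Z) \to H_n(\G,\Z) \xrightarrow{\partial} H_{n-1}(C,\Z) \xrightarrow{\theta_{n-1}} H_{n-1}(D,\Z) \to \cdots,$$
in which $\theta_n = i_\ast - j_\ast$ is the difference of the two homomorphisms induced by the two embeddings $C\hookrightarrow D$ that define the HNN extension.

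I would then restrict attention to the three-term segment
$$H_3(\G,\Z) \xrightarrow{\partial} H_2(C,\Z) \xrightarrow{\theta_2} H_2(D,\Z),$$
whose exactness gives $\operatorname{im}\partial = \ker \theta_2$. The first isomorphism theorem identifies $H_2(C,\Z)/\ker\theta_2$ with the subgroup $\operatorname{im}\theta_2 \le H_2(D,\Z)$. Since $H_2(D,\Z)$ is finitely generated and every subgroup of a finitely generated abelian group is finitely generated, the quotient $H_2(C,\Z)/\ker\theta_2$ is finitely generated.

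Now comes the decisive, elementary step. Were $\ker\theta_2$ finitely generated, then $H_2(C,\Z)$ --- being an extension of the finitely generated group $H_2(C,\Z)/\ker\theta_2$ by the finitely generated group $\ker\theta_2$ --- would itself be finitely generated, contrary to hypothesis. Hence $\ker\theta_2 = \operatorname{im}\partial$ is not finitely generated. Finally, a homomorphic image of a finitely generated group is finitely generated, so since $\partial$ maps $H_3(\G,\Z)$ onto a group that is not finitely generated, $H_3(\G,\Z)$ cannot be finitely generated. This is the desired conclusion.

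I do not anticipate a serious difficulty: the only point requiring genuine care is the correct set-up of the Mayer--Vietoris sequence for an HNN extension and, in particular, the identification of the connecting map $\partial$ and of $\theta_2$ as $i_\ast - j_\ast$. Once exactness of the relevant portion is in hand, the remainder is routine finiteness bookkeeping for abelian groups.
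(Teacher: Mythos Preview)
Your proof is correct and follows exactly the approach of the paper: both extract the three-term exact segment $H_3(\G,\Z)\to H_2(C,\Z)\to H_2(D,\Z)$ from the Mayer--Vietoris sequence for the HNN extension, and the paper simply leaves the routine finiteness bookkeeping (which you spell out) to the reader.
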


\begin{proof} The Mayer-Vietoris sequence for the HNN extension contains the exact
sequence
$$  H_{3}(\G,\Z) \to H_{2}(C,\Z) \to H_{2}(D,\Z).
$$ 
\end{proof}

\section{Proof of Theorem 1.1}  Given  20 words  $S=\{r_1,\dots,r_{20}\}$ in the free group $F=F(a_1,a_2)$, we denote by $\G({S})$ the group with
generators $a_1,a_2,a_3,a_4, t$ and relations
$$
[a_1,a_3], [a_1,a_4], [a_2,a_3], [a_2,a_4], [t,a_1a_3], [t,a_2a_4], [t,r_1],\dots,[t,r_{20}].
$$
Note that this presentation is of the type described in Theorem \ref{thm}. The group $\G({S})$ is an HNN extension of $F\times F$ with a stable letter $t$ that commutes with
the fibre product $P<F\times F$ associated to the presentation $\mathcal{P}=\<a,b\mid S\>$.

As in the proof of Lemma \ref{lemma}, we have a dichotomy:
if $G(\mathcal{P})=1$ then $\G({S}) = F\times F \times \mathbb{Z}$ is a RAAG; but
if $G(\mathcal{P})$ is infinite then $\G({S})$ is not
a RAAG, because RAAGs have finite classifying spaces \cite{KR} whereas
$H_3(\G({S}), \Z)$ is not finitely generated, by Lemma \ref{l:mv}, because
 $H_2(P,\Z)$ is not finitely generated.

Lemma \ref{lemma} tells us that there is no algorithm to determine which of the possibilities in this dichotomy 
holds. Moreover, by choosing sets $(S_n)$ as in the lemma, we can
arrange that when $\G(S_n)$ is not equal to  $F\times F\times\mathbb{Z}$, it will
have an unsolvable word problem: given a word $w$ in the generators $a_1,a_2$, Britton's Lemma implies that $[t,w]=1$ in $\G(S_n)$ if and only if $w=1$ in $G(\mathcal{P}_n)$,
and this is undecidable.

Since being of type ${\rm{FP}}_3$ and having
a solvable word problem are both invariants of commensurability and quasi-isometry,
there is no algorithm that can determine 
whether $\G({S_n})$ is commensurable with or
quasi-isometric to a RAAG.
$\square$

\end{document}